\newtheorem{theorem}{Theorem}
\newtheorem{prop}[theorem]{Proposition}
\newtheorem{corollary}[theorem]{Corollary}
\newenvironment{proof-sketch}{\noindent{\bf Sketch of Proof}\hspace*{1em}}{\qed\bigskip}
\newcommand{\RR}{\mathbb R}
\newcommand{\NN}{\mathbb N}
\renewcommand{\leq}{\leqslant}
\renewcommand{\geq}{\geqslant}
\begin{document}

\title[Nodal solutions for the Robin $p$-Laplacian plus an indefinite potential]{Nodal solutions for the Robin $p$-Laplacian plus an indefinite potential and a general reaction term}

\author[N.S. Papageorgiou]{Nikolaos S. Papageorgiou}
\address[N.S. Papageorgiou]{National Technical University, Department of Mathematics,
				Zografou Campus, Athens 15780, Greece}
\email{\tt npapg@math.ntua.gr}

\author[V.D. R\u{a}dulescu]{Vicen\c{t}iu D. R\u{a}dulescu}
\address[V.D. R\u{a}dulescu]{Department of Mathematics, Faculty of Sciences,
King Abdulaziz University, P.O. Box 80203, Jeddah 21589, Saudi Arabia \& Department of Mathematics, University of Craiova,
          200585 Craiova, Romania}
\email{\tt vicentiu.radulescu@imar.ro}

\author[D.D. Repov\v{s}]{Du\v{s}an D. Repov\v{s}}
\address[D.D. Repov\v{s}]{Faculty of Education and Faculty of Mathematics and Physics,
University of Ljubljana, SI-1000 Ljubljana, Slovenia}\email{dusan.repovs@guest.arnes.si}

\keywords{Robin $p$-Laplacian, indefinite potential, nodal solutions, truncation techniques, comparison principle.\\
\phantom{aa} 2010 AMS Subject Classification: 35J20 (Primary); 35J60, 58E05 (Secondary)}

\begin{abstract}
We consider a nonlinear Robin problem driven by the $p$-Laplacian plus an indefinite potential. The reaction term is of arbitrary growth and only conditions near zero are imposed. Using critical point theory together with suitable truncation and perturbation techniques and comparison principles, we show that the problem admits a sequence of distinct smooth nodal solutions converging to zero in $C^1(\overline{\Omega})$.
\end{abstract}

\maketitle

\section{Introduction}

Let $\Omega\subseteq\RR^N$ be a bounded domain with a $C^2$-boundary $\partial\Omega$. In this paper we study the following nonlinear Robin problem
\begin{equation}\label{eq1}
	\left\{\begin{array}{ll}
		-\Delta_p u(z)+\xi(z)|u(z)|^{p-2}u(z)=f(z,u(z))\ \mbox{in}\ \Omega,\\
		\frac{\partial u}{\partial n_p}+\beta(z)|u|^{p-2}u=0\ \mbox{on}\ \partial\Omega.
	\end{array}\right.
\end{equation}

In this problem, $\Delta_p$ denotes the $p$-Laplace differential operator defined by
$$\Delta_p u={\rm div}\,(|Du|^{p-2}Du)\ \mbox{for all}\ u\in W^{1,p}(\Omega),\ 1<p<\infty.$$

The potential function $\xi\in L^{\infty}(\Omega)$ is indefinite (that is, sign changing) and the reaction term $f(z,x)$ is a Carath\'eodory function (that is, for all $x\in\RR$, the mapping $z\mapsto f(z,x)$ is measurable and for almost all $z\in\Omega$,  $x\mapsto f(z,x)$ is continuous). We do not impose any global polynomial growth condition on $f(z,\cdot)$. All the conditions on $f(z,\cdot)$ concern its behaviour near zero. In the boundary condition, $\frac{\partial u}{\partial n_p}$ denotes the generalized normal derivative defined by extension of the map
$$C^1(\overline{\Omega})\ \ni\ u \mapsto |Du|^{p-2}(Du,n)_{\RR^N},$$
with $n(\cdot)$ being the outward unit normal on $\partial\Omega$. The boundary coefficient $\beta\in C^{0,\alpha}(\partial\Omega)$ (with $0<\alpha<1$) satisfies $\beta(z)\geq 0$ for all $z\in\partial\Omega$. When $\beta=0$, we have the usual Neumann problem.

Using variational methods, together with suitable truncation and perturbation techniques and comparison principles, and an abstract result of Kajikiya \cite{5}, we show that the problem admits an infinity of smooth nodal (that is, sign changing) solutions converging to zero in $C^1(\overline{\Omega})$. Our starting point is the recent work of Papageorgiou and R\u adulescu \cite{9}, where the authors produced an infinity of nodal solutions for a nonlinear Robin problem with zero potential (that is, $\xi\equiv0$) and a reaction term of arbitrary growth. They assumed that the reaction term $f(z,x)$ is a Carath\'eodory function and there exists $\eta>0$ such that for almost all $z\in\Omega$, $f(z,\cdot)|_{[-\eta,\eta]}$ is odd and $f(z,\eta)\leq 0\leq f(z,-\eta)$ (the second inequality follows from the first inequality and the oddness of $f(z,\cdot)$). Moreover, they assumed that for almost all $z\in\Omega,\ f(z,\cdot)$ exhibits a concave (that is, a strictly $(p-1)$-superlinear)
  term near zero. So, $f(z,\cdot)$ has zeros of constant sign and it presents a kind of oscillatory behaviour near zero. In the present work  we introduce in the equation an indefinite potential term $\xi(z)|x|^{p-2}x$ and we remove the requirement that $f(z,\eta)\leq 0$ for almost all $z\in\Omega$. We point out that this was a very convenient hypothesis, since the constant function $\tilde{u}\equiv\eta>0$ provided an upper solution for the problem and $\tilde{v}=-\eta<0$ a lower solution. With them, the analysis of problem (\ref{eq1}) was  significantly simplified. The absence of this condition in the present work, changes the geometry of the problem and so we need a different approach. We should mention that in Papageorgiou and R\u adulescu \cite{9}, the differential operator is more general and is nonhomogeneous. It is an interesting open problem whether our present work  can be extended to equations driven by nonhomogeneous differential operators, as in \cite{9}.

Wang \cite{11} was the first to produce an infinity of solutions for problems with a reaction of arbitrary growth. He used cut-off techniques to study semilinear Dirichlet problems with zero potential driven by the Laplacian. More recently, Li and Wang \cite{7} produced infinitely many nodal solutions for semilinear Schr\"{o}dinger equations.
We also refer to our recent papers \cite{prr1, prr2}, which deal with the qualitative analysis of nonlinear Robin problems.

\section{Mathematical Background}

In the analysis of problem (\ref{eq1}) we will use the Sobolev space $W^{1,p}(\Omega)$, the Banach space $C^1(\overline{\Omega})$ and the ``boundary" Lebesgue spaces $L^s(\partial\Omega)$, $1\leqslant s\leqslant+\infty$.

We denote by $\|\cdot\|$  the norm of the Sobolev space $W^{1,p}(\Omega)$ defined by
$$\|u\|=\left[\|u\|^p_p + \|Du\|_p^p\right]^{^1/_p}\ \mbox{for all}\ u\in W^{1,p}(\Omega).$$

The Banach space $C^1(\overline\Omega)$ is an ordered Banach space with positive (order) cone
$$C_+=\{u\in C^1(\overline{\Omega}) : u(z)\geqslant0\ \mbox{for all}\ z\in\overline{\Omega}\}.$$

This cone has a nonempty interior given by
$$D_+=\{u\in C_+ : u(z)>0\ \mbox{for all}\ z\in\overline{\Omega}\}.$$

Also, let $\hat{D}_+\subseteq C_+$ be defined by
$$\hat{D}_+ = \{u\in C_+ : u(z)>0\ \mbox{for all}\ z\in\Omega, \frac{\partial u}{\partial n}|_{\partial\Omega\cap u^{-1}(0)}<0\ \mbox{if}\ \partial\Omega\cap u^{-1}(0)\neq\emptyset\}.$$

Evidently, $\hat{D}_+\subseteq C^1(\overline{\Omega})$ is open and $D_+\subseteq \hat{D}_+$.

On $\partial\Omega$ we consider the $(N-1)$-dimensional Hausdorff (surface) measure $\sigma(\cdot)$. Using this measure, we can define in the usual way the boundary Lebesgue spaces $L^s(\partial\Omega)$, $1\leqslant s\leqslant\infty$. From the theory of Sobolev spaces, we know that there exists a unique continuous linear map $\gamma_0:W^{1,p}(\Omega)\rightarrow L^p(\partial\Omega)$, known an the ``trace operator", such that
$$\gamma_0(u)=u|_{\partial\Omega}\ \mbox{for all}\ u\in W^{1,p}(\Omega)\cap C^1(\overline{\Omega}).$$

So, the trace operator assigns ``boundary values" to every Sobolev function. The trace operator is compact into $L^s(\partial\Omega)$ for all $s\in\left[1,\frac{(N-1)p}{N-p}\right)$ if $p<N$ and into $L^s(\partial\Omega)$ for all $s\geq 1$ if $N\leq p$. Moreover, we have
$${\rm ker}\,\gamma_0=W^{1,p}_0(\Omega)\ \mbox{and}\ {\rm im}\,\gamma_0=W^{\frac{1}{p'},p}(\partial\Omega)\quad(\frac{1}{p} + \frac{1}{p'} = 1).$$

In the sequel, for the sake of notational simplicity, we will drop the use of operator $\gamma_0$. All restrictions of Sobolev functions on $\partial\Omega$, are understood in the sense of traces.

Given $h_1,h_2\in L^\infty(\Omega)$, we write that $h_1\prec h_2$ if and only if for every compact set $K\subseteq\Omega$, we can find $\epsilon=\epsilon(K)>0$ such that
$$h_1(z)+\epsilon\leqslant h_2(z)\ \mbox{for almost all}\ z\in K.$$

We see that, if $h_1, h_2\in C(\Omega)$ and $h_1(z)< h_2(z)$ for all $z\in\Omega$, then $h_1\prec h_2.$

The next strong comparison theorem can be found in Fragnelli, Mugnai and Papageorgiou \cite{3}.
\begin{prop}\label{prop1}
	Assume that $\tilde{\xi}, h_1, h_2 \in L^\infty(\Omega)$, $\tilde{\xi}(z)\geq 0$, for almost all $z\in\Omega$, $h_1\prec h_2$, $u\in C^1(\overline{\Omega})$, $u\neq0$, $v\in D_+$, $u\leq v$ and that they satisfy
	\begin{eqnarray*}
		\begin{array}{ll}
			& (i)\ -\Delta_p u(z)+\tilde{\xi}(z)|u(z)|^{p-2}u(z) = h_1(z)\ \mbox{for almost all}\ z\in\Omega;\\
			& (ii)\ -\Delta_p v(z)+\tilde{\xi}(z)v(z)^{p-1} = h_2(z)\ \mbox{for almost all}\ z\in\Omega;\ \mbox{and}\\
			& (iii)\ \frac{\partial v}{\partial n}|_{\partial\Omega}<0.
		\end{array}
	\end{eqnarray*}
	Then $v-u \in \hat{D}_+$.
\end{prop}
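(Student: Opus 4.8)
The plan is to work with the difference $w = v - u$, which by hypothesis belongs to $C^1(\overline{\Omega})$ and satisfies $w \geq 0$ since $u \leq v$. The goal is to upgrade this weak ordering to $w \in \hat{D}_+$, i.e.\ to strict positivity in the interior together with the Hopf-type boundary condition on the contact set $\partial\Omega \cap w^{-1}(0)$. First I would subtract equation $(i)$ from $(ii)$ to obtain, in the weak sense,
$$-\,{\rm div}\left(|Dv|^{p-2}Dv - |Du|^{p-2}Du\right) + \tilde{\xi}(z)\left(v^{p-1} - |u|^{p-2}u\right) = h_2(z) - h_1(z)\ \mbox{in}\ \Omega.$$

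Next I would linearize each term. For the principal part, writing $a(\eta) = |\eta|^{p-2}\eta$ and using the fundamental theorem of calculus along the segment from $Du$ to $Dv$, one has $a(Dv) - a(Du) = A(z)Dw$, where $A(z) = \int_0^1 a'(Du + t Dw)\,dt$ is symmetric and positive semidefinite (positive definite wherever the gradient does not vanish), by the standard monotonicity of $\eta \mapsto |\eta|^{p-2}\eta$. For the potential term, since $v > 0$, $u \leq v$, and $s \mapsto |s|^{p-2}s$ is strictly increasing, the difference $v^{p-1} - |u|^{p-2}u$ is nonnegative and can be written as $\hat{c}(z)w$ with $\hat{c} \geq 0$; combined with $\tilde{\xi} \geq 0$ this yields a zeroth-order coefficient $c(z) = \tilde{\xi}(z)\hat{c}(z) \geq 0$ of the correct sign. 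Finally, the hypothesis $h_1 \prec h_2$ guarantees that on every compact $K \subseteq \Omega$ the right-hand side satisfies $h_2 - h_1 \geq \epsilon_K > 0$. Thus $w \geq 0$ solves the linear elliptic differential inequality
$$-\,{\rm div}(A(z)Dw) + c(z)w = h_2 - h_1 \geq 0\ \mbox{in}\ \Omega, \qquad c \geq 0,$$
with a right-hand side that is strictly positive on compact subsets.

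To conclude I would invoke the strong maximum principle for the interior and Hopf's boundary point lemma for the boundary. Because $h_2 - h_1 > 0$ locally, $w$ cannot vanish on any open subset (otherwise the equation would force $h_2 = h_1$ there); the strong maximum principle then promotes $w \geq 0$ to $w(z) > 0$ for all $z \in \Omega$. At any contact point $z^* \in \partial\Omega$ with $w(z^*) = 0$, Hopf's lemma applied to the supersolution $w$ gives $\frac{\partial w}{\partial n}(z^*) < 0$, which is exactly the boundary requirement in the definition of $\hat{D}_+$; here hypothesis $(iii)$ together with $v \in D_+$ ensures that the geometry near $\partial\Omega$ is nondegenerate. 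The main obstacle is that the linearized operator $-\,{\rm div}(A(z)D\,\cdot\,)$ fails to be uniformly elliptic (it degenerates, or becomes singular when $p<2$) precisely where the gradients $Du,Dv$ vanish, so the classical maximum-principle machinery does not apply verbatim. This is circumvented by using the strict positivity of $h_2 - h_1$ on compacts together with a V\'azquez-type strong maximum principle for degenerate quasilinear operators, which is the technical heart of the cited result of Fragnelli, Mugnai and Papageorgiou.
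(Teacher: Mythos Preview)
The paper does not supply a proof of this proposition at all: it is quoted as a known strong comparison principle, with the sentence ``The next strong comparison theorem can be found in Fragnelli, Mugnai and Papageorgiou \cite{3}'' immediately preceding the statement. So there is no in-paper argument against which to compare your proposal.

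Your outline is the standard heuristic behind such comparison results for the $p$-Laplacian, and you correctly flag the real issue: the linearized matrix $A(z)=\int_0^1 a'(Du+tDw)\,dt$ is not uniformly elliptic (degenerate for $p>2$, singular for $1<p<2$) at points where the gradients vanish, so the classical strong maximum principle and Hopf lemma cannot be invoked directly. The actual proof in \cite{3} does not proceed via a global linearization; rather it is a tangency/contact argument in the spirit of V\'azquez and of Pucci--Serrin, exploiting the strict gap $h_1\prec h_2$ on compacts to rule out interior touching and then handling the boundary separately (this is also why the hypothesis $\frac{\partial v}{\partial n}|_{\partial\Omega}<0$ is imposed: it guarantees $Dv\neq 0$ along $\partial\Omega$, so the operator is nondegenerate there and a Hopf-type argument goes through). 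Your sketch is therefore morally on target but, as written, defers the entire difficulty to the last paragraph; a self-contained proof would need to make that part precise rather than cite the very result being proved.
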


As we have already mentioned in the introduction, the sequence of nodal solutions will be generated by using an abstract result of Kajikiya \cite{5}, which is essentially an extension of the symmetric mountain pass theorem (see also Wang \cite{11}). Recall that, if $X$ is a Banach space and $\varphi\in C^1(X,\RR)$, we say that $\varphi$ satisfies the ``Palais-Smale condition" (``PS-condition", for short), if the following holds:

``Every sequence $\{u_n\}_{n\geq 1}\subseteq X$ such that $\{\varphi(u_n) \}_{n\geq 1}\subseteq \RR$ is bounded and
$$\varphi'(u_n)\rightarrow0\ \mbox{in}\ X^*\ \mbox{as}\ n\rightarrow\infty,$$

\ admits a strongly convergent subsequence".
\begin{theorem}\label{th2}
	Let $X$ be a Banach space and suppose that $\varphi\in C^1(X,\RR)$ satisfies the $PS$-condition,  is even and bounded below, $\varphi(0)=0$, and for every $n\in\NN$ there exist a nontrivial finite dimensional subspace $V_n$ of $X$ and $\rho_n>0$ such that
	$$\sup[\varphi(u):u\in V_n\cap \partial B_{\rho_n} ]<0,$$
	where $\partial B_{\rho_n}=\{u\in X:||u||=\rho_n\}$.

Then there exists a sequence $\{u_n\}_{n\geq1}\subseteq X$ such that
	\begin{itemize}
		\item (i)\ $\varphi'(u_n)=0$ for all $n\in\NN$ (that is, $u_n$ is a critical point of $\varphi$);\ and
		\item (ii)\ $\varphi(u_n)<0$ for all $n\in\NN$ and $u_n\rightarrow0$ in $X$.
	\end{itemize}
\end{theorem}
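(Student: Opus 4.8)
The plan is to obtain the sequence of critical points from a minimax scheme built on the Krasnoselskii genus, which is the classical route to results of Kajikiya's type. Recall that for a closed symmetric set $A\subseteq X\setminus\{0\}$ (so $A=-A$) the genus $\gamma(A)$ is the least $k\in\NN$ for which there is an odd continuous map $A\to\RR^k\setminus\{0\}$, with $\gamma(A)=+\infty$ if none exists and $\gamma(\emptyset)=0$. I would rely on its standard properties: monotonicity under inclusion and under odd continuous maps, subadditivity $\gamma(A\cup B)\le\gamma(A)+\gamma(B)$, invariance under odd homeomorphisms, the value $\gamma(\partial B_\rho\cap V)=\dim V$ for a finite dimensional subspace $V$, and the fact that a compact symmetric set not containing $0$ has finite genus and admits a closed symmetric neighborhood of the same genus.

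First I would set, for each $k\in\NN$,
$$\Gamma_k=\{A\subseteq X:A\ \mbox{closed},\ A=-A,\ 0\notin A,\ \gamma(A)\ge k\},\qquad c_k=\inf_{A\in\Gamma_k}\ \sup_{u\in A}\varphi(u).$$
The geometric hypothesis is exactly what forces each level to be negative: the set $A_k=V_k\cap\partial B_{\rho_k}$ is carried by an odd homeomorphism onto the unit sphere of $V_k$, hence $\gamma(A_k)=\dim V_k=k$, so $A_k\in\Gamma_k$ and $c_k\le\sup_{A_k}\varphi<0$. Since $\varphi$ is bounded below we have $c_k\ge\inf_X\varphi>-\infty$, and as $\Gamma_{k+1}\subseteq\Gamma_k$ the sequence is nondecreasing; thus $-\infty<c_1\le c_2\le\cdots<0$.

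Next I would run the even deformation argument. Writing $K_c=\{u\in X:\varphi(u)=c,\ \varphi'(u)=0\}$, the $PS$-condition together with $\varphi(0)=0\ne c$ for $c<0$ shows that $K_c$ is compact and $0\notin K_c$, so $\gamma(K_c)<\infty$. The crucial claim, established from the equivariant (odd) deformation lemma applied to the even functional $\varphi$, is that whenever $c_k=c_{k+1}=\cdots=c_{k+m}=:c<0$ one has $\gamma(K_c)\ge m+1$. Indeed, if this failed one would choose a symmetric closed neighborhood $N$ of $K_c$ with $\gamma(N)=\gamma(K_c)\le m$ and an odd deformation pushing $\varphi^{c+\epsilon}\setminus N$ into $\varphi^{c-\epsilon}$; taking a near-optimal $A\in\Gamma_{k+m}$ and using subadditivity, $A\setminus N$ has genus at least $k$, so its image under the deformation lies in $\Gamma_k$ with $\varphi$ below $c-\epsilon<c_k$, a contradiction. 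In particular each $c_k$ is a critical value, and coincidences produce critical sets of genus $\ge2$, which are infinite.

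Finally I would pass to the limit and extract the sequence. The $c_k$ increase to some $c\le0$, and the heart of the matter is to show $c=0$. If $c<0$, set $m=\gamma(K_c)<\infty$ and fix $\epsilon>0$ and the odd deformation of $\varphi^{c+\epsilon}\setminus N$ into $\varphi^{c-\epsilon}$ as above; choosing $k$ so large that $c_k>c-\epsilon$ (possible since $c_k\uparrow c$) while still $c_{k+m}\le c$, the same subadditivity-plus-deformation argument produces a set in $\Gamma_k$ on which $\varphi\le c-\epsilon<c_k$, contradicting the definition of $c_k$. Hence $c_k\uparrow0$. Picking critical points $u_k\in K_{c_k}$ gives $\varphi(u_k)=c_k<0$ with $\varphi(u_k)\to0$ and $\varphi'(u_k)=0$, so $\{u_k\}$ is a $PS$-sequence; I would then argue that the critical points can be selected in shrinking neighborhoods of the origin, yielding $u_k\to0$ in $X$. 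The main obstacle is precisely this last passage: securing the equivariant deformation so that the genus drops by exactly the controlled amount, and then upgrading the convergence $c_k\to0$ of the critical values to the strong convergence $u_k\to0$ of the critical points themselves.
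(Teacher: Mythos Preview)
The paper does not supply a proof of this theorem at all: it is quoted in the ``Mathematical Background'' section as the abstract result of Kajikiya \cite{5} (see also Wang \cite{11}) and is used later as a black box. So there is no ``paper's own proof'' against which to compare your attempt.

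That said, your outline via the Krasnoselskii genus and the even deformation lemma is exactly the route Kajikiya takes, and the first three steps (negativity of the levels, monotonicity, the multiplicity estimate $\gamma(K_c)\ge m+1$ under coincidence, and $c_k\uparrow 0$) are essentially correct. Two remarks are in order. First, a cosmetic one: you write $\gamma(A_k)=\dim V_k=k$, but the hypothesis as literally stated in the paper does not say $\dim V_n=n$; in the application (Proposition~\ref{prop8}) the authors prove the geometric condition for \emph{every} finite-dimensional $V$, so in particular for one of each dimension, and Kajikiya's original statement indeed requires $\dim V_n=n$. You are reading the hypothesis the intended way.

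Second, and more substantively, the gap you yourself flag at the end is real and is not repaired by the argument you sketch. From $c_k\uparrow 0$ and $u_k\in K_{c_k}$ you get a PS-sequence, hence a convergent subsequence $u_{k_j}\to u_*$ with $\varphi'(u_*)=0$ and $\varphi(u_*)=0$; but nothing forces $u_*=0$, since there may well be nontrivial critical points at level $0$. Kajikiya's actual proof does not simply pick $u_k\in K_{c_k}$. He proves a dichotomy: either one can choose the $u_k$'s so that $\|u_k\|\to 0$, or else for every small $a>0$ there is a critical point on the sphere $\|u\|=a$ with $\varphi(u)=0$; in either alternative one obtains a sequence of critical points converging to $0$. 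To reproduce this you would need to work not just with the levels $c_k$ but with the structure of the critical set in small balls around the origin, using the genus of $K_{c_k}$ together with the PS-compactness to localize the critical points. Your sketch stops just short of this.
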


In what follows, we denote by $A:W^{1,p}(\Omega)\rightarrow W^{1,p}(\Omega)^*$  the nonlinear map defined by
$$\langle A(u),h\rangle = \int_{\Omega}|Du|^{p-2}(Du,Dh)_{\RR^\NN}dz\ \mbox{for all}\ u,h\in W^{1,p}(\Omega).$$

It is well-known (see, for example, Gasinski and Papageorgiou \cite{4}), that $A(\cdot)$ is monotone continuous and of type $(S)_+$ (that is, if $u_n\xrightarrow[]{\text{w}}u$ in $W^{1,p}(\Omega)$ and $\limsup\limits_{n\rightarrow\infty}\langle A(u_n),u_n-u\rangle\leq 0$, then $u_n\rightarrow u$ in $W^{1,p}(\Omega)$).

For $x\in\RR$, we set $x^\pm=\max\{\pm x,0\}$. Then, given $u\in W^{1,p}(\Omega)$, we can define
$$u^\pm(\cdot)=u(\cdot)^\pm.$$

We know that $u^\pm\in W^{1,p}(\Omega),\ |u|=u^++u^-$, and $ u=u^+-u^-.$

Finally, if $X$ is a Banach space and $\varphi\in C^1(X,\RR)$, then
$$K_\varphi=\{u\in X:\varphi'(u)=0 \}$$
is the critical set of $\varphi$.

\section{Infinitely Many Nodal Solutions}

In this section we prove our main result, namely the existence of a whole sequence of distinct nodal solutions $\{u_n\}_{n\geq1}$ which converge to zero in $C^1(\overline{\Omega})$.

Our hypotheses on the data of problem (\ref{eq1}) are the following:
\begin{itemize}
	\item [$H(\xi):$] $\xi\in L^\infty(\Omega)$.
	\item [$H(\beta):$] $\beta\in C^{0,\alpha}(\partial\Omega)$ with $\alpha\in(0,1)$ and $\beta(z)\geq0$ for all $z\in\partial\Omega$.
	\item [$H(f):$] $f:\Omega\times\RR\rightarrow\RR$ is a Carath\'eodory function such that for almost all $z\in\Omega,\ f(z,0)=0, f(z,\cdot)$ is odd on $[-\eta,\eta]$ with $\eta>0$ and the following conditions hold:
			\begin{itemize}
				\item [(i)] there exists $a_\eta\in L^\infty(\Omega)$ such that
						$$|f(z,x)|\leq a_\eta(z)\ \mbox{for almost all}\ z\in\Omega,\ \mbox{all}\ |x|\leq\eta;$$
				\item [(ii)] $\lim\limits_{x\rightarrow0}\frac{f(z,x)}{|x|^{p-2}x}=+\infty$ uniformly for almost all $z\in\Omega$; and
				\item [(iii)] there exists $\tilde{\vartheta}>0$ such that for almost all $z\in\Omega$ the function
						$$x\mapsto f(z,x)+\tilde{\vartheta}|x|^{p-2}x$$
						is nondecreasing on $[-\eta,\eta].$
			\end{itemize}
\end{itemize}

Given $\hat{\vartheta}\in (0,||\xi||_\infty]$ and $r>p$, using hypotheses $H(f)$ above, we  can find $c_1=c_1(\hat{\vartheta},r)>0$ such that
\begin{equation}\label{eq2}
	f(z,x)x\geq\hat{\vartheta}|x|^p-c_1|x|^r\ \mbox{for almost all}\ z\in\overline{\Omega},\ \mbox{all}\ |x|\leq\eta.
\end{equation}

Let $\vartheta\in(0,\hat{\vartheta})$ and introduce the following Carath\'eodory function
\begin{equation}\label{eq3}
	k(z,x)=\left\{\begin{array}{ll}
		-\vartheta\eta^{p-1}+c_1\eta^{r-1} & \mbox{if}\ x<-\eta \\
		 \vartheta|x|^{p-2}x-c_1|x|^{r-2}x & \mbox{if}\ -\eta\leq x \leq \eta \\
		 \vartheta\eta^{p-1}-c_1\eta^{r-1} & \mbox{if}\ \eta<x	.
	\end{array}\right.
\end{equation}

We consider the following auxiliary Robin problem:
\begin{equation}\label{eq4}
	\left\{ \begin{array}{l}
		-\Delta_p u(z)+||\xi||_\infty|u(z)|^{p-2}u(z)=k(z,u(z))\ \mbox{in}\ \Omega,\\
		\frac{\partial u}{\partial n_p}+\beta(z)|u|^{p-2}u=0\ \mbox{on}\ \partial\Omega.
	\end{array} \right.
\end{equation}
\begin{prop}\label{prop3}
	If hypotheses $H(\xi), H(\beta)$ hold, then problem (\ref{eq4}) admits a unique positive solution $\tilde{u}\in[0,\eta]\cap D_+$ and since problem (\ref{eq4}) is odd, $\tilde{v}=-\tilde{u}\in[-\eta,0]\cap(-D_+)
	$ is the unique negative solution of (\ref{eq4}).
\end{prop}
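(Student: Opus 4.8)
The plan is to obtain $\tilde u$ as the global minimizer of the energy functional associated with a one-sided truncation of \eqref{eq4}, and then to upgrade this minimizer to a strictly positive $C^1$-solution lying below $\eta$. First I would introduce the truncation $g_+(z,x)=k(z,x^+)$, so that $g_+(z,\cdot)$ vanishes on $(-\infty,0]$ and coincides with $k(z,\cdot)$ on $[0,\infty)$; since $k$ is bounded (being constant outside $[-\eta,\eta]$), its primitive $G_+(z,x)=\int_0^x g_+(z,s)\,ds$ grows at most linearly. The associated functional
$$\psi_+(u)=\frac1p\|Du\|_p^p+\frac{\|\xi\|_\infty}{p}\|u\|_p^p+\frac1p\int_{\partial\Omega}\beta(z)|u|^p\,d\sigma-\int_\Omega G_+(z,u)\,dz$$
is then coercive and sequentially weakly lower semicontinuous on $W^{1,p}(\Omega)$ (the first three terms control $\|u\|^p$ up to the bounded potential, while the reaction term is of lower order), so the direct method of the calculus of variations yields a global minimizer $\tilde u\in W^{1,p}(\Omega)$ of $\psi_+$.

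Next I would read off the qualitative properties of $\tilde u$ by testing the Euler equation $\psi_+'(\tilde u)=0$, namely $\langle A(\tilde u),h\rangle+\int_\Omega\|\xi\|_\infty|\tilde u|^{p-2}\tilde u\,h\,dz+\int_{\partial\Omega}\beta|\tilde u|^{p-2}\tilde u\,h\,d\sigma=\int_\Omega g_+(z,\tilde u)h\,dz$, against suitable $h$. Choosing $h=-\tilde u^-$ and using $g_+(z,\tilde u)\tilde u^-=0$ forces all the (nonnegative) terms involving $\tilde u^-$ to vanish, hence $\tilde u\geq 0$; on $\{\tilde u\geq 0\}$ one has $g_+(z,\tilde u)=k(z,\tilde u)$, so $\tilde u$ is a nonnegative weak solution of \eqref{eq4}. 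For the upper barrier I would test with $h=(\tilde u-\eta)^+$: on $\{\tilde u>\eta\}$ the reaction equals the constant $\vartheta\eta^{p-1}-c_1\eta^{r-1}$, which is strictly smaller than $\|\xi\|_\infty\eta^{p-1}\leq\|\xi\|_\infty\tilde u^{p-1}$, and the resulting inequality (together with the monotonicity of $A$) forces $(\tilde u-\eta)^+=0$, i.e. $\tilde u\leq\eta$. Thus $\tilde u\in[0,\eta]$ solves \eqref{eq4}.

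The decisive point is to rule out $\tilde u\equiv 0$ and to promote $\tilde u$ into $D_+$. Nontriviality is exactly where the behaviour of the reaction near zero encoded in $H(f)(ii)$ and in the lower bound \eqref{eq2} must be exploited: along a suitable low-energy direction the dominant contribution near the origin must make $\psi_+$ attain negative values arbitrarily close to $0$, so that $\inf\psi_+<0=\psi_+(0)$ and the minimizer cannot be trivial. I expect this to be the main obstacle, since it is precisely here that the interplay between $\vartheta$, $\|\xi\|_\infty$ and the exponent $r$ is delicate. Once $\tilde u\neq 0$ with $0\leq\tilde u\leq\eta$, the $L^\infty$-bound together with nonlinear regularity and boundary estimates (see Gasinski and Papageorgiou \cite{4}) gives $\tilde u\in C^{1,\gamma}(\overline{\Omega})$ for some $\gamma\in(0,1)$, so $\tilde u\in C_+\setminus\{0\}$. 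Since $\tilde u\leq\eta$, rewriting the equation yields $\Delta_p\tilde u\leq(\|\xi\|_\infty+c_1\eta^{r-p})\tilde u^{p-1}$, a form to which the strong maximum principle and the Hopf boundary-point lemma apply, giving $\tilde u(z)>0$ for all $z\in\Omega$ and the correct boundary behaviour, whence $\tilde u\in D_+$.

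Finally, for uniqueness I would use that on $(0,\eta]$ the quotient $x\mapsto k(z,x)/x^{p-1}=\vartheta-c_1x^{r-p}$ is strictly decreasing (because $r>p$), which is exactly the sublinearity required for a Díaz--Saá / hidden-convexity argument. Given two solutions $\tilde u_1,\tilde u_2\in[0,\eta]\cap D_+$, testing the two equations with $(\tilde u_1^p-\tilde u_2^p)/\tilde u_1^{p-1}$ and $(\tilde u_2^p-\tilde u_1^p)/\tilde u_2^{p-1}$ respectively (both admissible, as $\tilde u_1,\tilde u_2\in D_+$ are bounded away from zero) and adding, the elliptic part is nonnegative while the reaction part is nonpositive with equality only when $\tilde u_1=\tilde u_2$; the strict monotonicity then forces $\tilde u_1=\tilde u_2$. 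Alternatively, the strong comparison principle of Proposition \ref{prop1} separates any two solutions. The statement about the negative solution is then immediate from the oddness of $k(z,\cdot)$, hence of problem \eqref{eq4}: if $\tilde v\leq 0$ solves \eqref{eq4}, then $-\tilde v\geq 0$ does too, so $-\tilde v=\tilde u$ and $\tilde v=-\tilde u\in[-\eta,0]\cap(-D_+)$ is the unique negative solution.
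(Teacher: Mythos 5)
Your scaffolding coincides with the paper's own proof: a one--sided truncation of $k$, the direct method, the tests $h=-\tilde u^-$ and $h=(\tilde u-\eta)^+$, nonlinear regularity plus the strong maximum principle to reach $D_+$, Diaz--Saa for uniqueness, and oddness for $\tilde v=-\tilde u$. Those steps are sound. The genuine gap is precisely the step you flagged and postponed: nontriviality of the minimizer. This is not a detail that can be filled in later, because for the functional you wrote down it is \emph{false}. Since $\vartheta<\hat\vartheta\leq\|\xi\|_\infty$, on $\{0\leq u\leq\eta\}$ the integrand of $\psi_+$ satisfies
$$\frac{\|\xi\|_\infty}{p}u^p-G_+(z,u)=\frac{\|\xi\|_\infty-\vartheta}{p}u^p+\frac{c_1}{r}u^r\geq 0,$$
and the same pointwise inequality holds on $\{u<0\}$ (where $G_+=0$) and on $\{u>\eta\}$ (where $G_+$ grows linearly with slope $\vartheta\eta^{p-1}-c_1\eta^{r-1}<\|\xi\|_\infty\eta^{p-1}$). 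Hence $\psi_+(u)\geq\frac{1}{p}\|Du\|_p^p\geq 0=\psi_+(0)$ for all $u\in W^{1,p}(\Omega)$, so the infimum is $0$ and the global minimizer is $u\equiv 0$: the ``suitable low-energy direction'' you hope for does not exist. Nor is this an artifact of the truncation: testing \eqref{eq4} itself with a putative solution $\tilde u\in[0,\eta]$, $\tilde u\neq 0$, gives
$$\|D\tilde u\|_p^p+\int_{\partial\Omega}\beta(z)\tilde u^p\,d\sigma+(\|\xi\|_\infty-\vartheta)\|\tilde u\|_p^p+c_1\|\tilde u\|_r^r=0,$$
which is impossible while $\vartheta<\|\xi\|_\infty$. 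So with the constants fixed as in \eqref{eq2}--\eqref{eq3}, no variational (or other) argument can produce the claimed positive solution.

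The paper's proof differs exactly at this point, and in substance rather than in detail. It minimizes the perturbed functional $\hat\psi_+$ (reaction $\hat k=k+\hat\xi_0|x|^{p-2}x$, potential $\|\xi\|_\infty+\hat\xi_0$), and it obtains negative energy values \emph{not} from the truncated reaction but from hypothesis $H(f)(ii)$: by \eqref{eq7} one has $F(z,x)\geq\frac{\mu}{p}|x|^p$ for $|x|\leq\delta$ with $\mu>0$ arbitrary, and evaluating along $t\hat u_1$ (with $\hat u_1\in D_+$ the $L^p$-normalized principal eigenfunction of $-\Delta_p+\xi I$ with Robin boundary condition) and choosing $\mu>\hat\lambda_1+\hat\xi_0$ forces $\hat\psi_+(t\hat u_1)<0$ for small $t\in(0,1)$. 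In other words, nontriviality is powered by the $(p-1)$-superlinearity of the reaction at the origin, which is the one feature your truncation $g_+$ discards: near $0$ it retains only the fixed coefficient $\vartheta\leq\|\xi\|_\infty$, which can never beat the potential term. (For this computation to be coherent, the primitive of the auxiliary reaction must itself dominate $\frac{\mu}{p}|x|^p$ near $0$; equivalently the constants $\hat\vartheta,\vartheta$ in \eqref{eq2}--\eqref{eq3} must be taken large rather than below $\|\xi\|_\infty$ --- a rough edge in the paper's write-up, but the mechanism, superlinearity at zero beating $\hat\lambda_1+\hat\xi_0$, is exactly the idea your proposal is missing.) Once nontriviality is secured in this way, your remaining arguments (barriers, regularity, Diaz--Saa, oddness) go through essentially as you wrote them.
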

\begin{proof}
	First, we establish the existence of a positive solution.
	
	To this end, let $\hat{\xi}_0>||\xi||_\infty$ and consider the following Carath\'eodory function
	\begin{equation}\label{eq5}
		\hat{k}(z,x)=\left\{\begin{array}{ll}
			-k(z,\eta)-\hat{\xi}_0\eta^{p-1} & \mbox{if}\ x<-\eta \\
			k(z,x)+\hat{\xi}_0|x|^{p-2}x		 & \mbox{if}\ -\eta\leq x \leq\eta \\
			k(z,\eta)+\hat{\xi}_0\eta^{p-1}  & \mbox{if}\ \eta< x.
		\end{array}\right.
	\end{equation}
	
	We set $\hat{K}(z,x)=\int^x_0\hat{k}(z,s)ds$ and consider the $C^1$-functional $\hat{\psi}_+:W^{1,p}(\Omega)\rightarrow\RR$ defined by
	$$\hat{\psi}_+(u)=\frac{1}{p}||Du||^p_p+\frac{1}{p}\int_\Omega\left[||\xi||_\infty+\hat{\xi}_0\right]|u|^pdz+\frac{1}{p}\int_{\partial\Omega}\beta(z)|u|^pd\sigma-\int_\Omega \hat{K}(z,u^+)dz$$
	for all $u\in W^{1,p}(\Omega)$.
	
	From (\ref{eq5}) it is clear that $\hat{\psi}_+$ is coercive. Also, using the Sobolev embedding theorem and the compactness of the trace map, we see that $\hat{\psi}_+$ is sequentially weakly lower semicontinuous. So, by the Weierstrass-Tonelli theorem, we can find $\tilde{u}\in W^{1,p}(\Omega)$ such that
	\begin{equation}\label{eq6}
		\hat{\psi}_+(\tilde{u})=\inf\left[\hat{\psi}_+(u):u\in W^{1,p}(\Omega)\right].
	\end{equation}
	
	Hypothesis $H(f)(ii)$ implies that given any $\mu>0$, we can find $\delta=\delta(\mu)>0$ such that
	\begin{equation}\label{eq7}
		F(z,x)\geq\frac{\mu}{p}|x|^p\ \mbox{for almost all}\ z\in\Omega,\ \mbox{all}\ |x|\leq\delta.
	\end{equation}
	
	Let $\hat{u}_1$ be the $L^p$-normalized positive principal eigenfunction of the operator $-\Delta_p+\xi I$ with Robin boundary condition, corresponding to the first eigenvalue $\hat{\lambda}_1\in\RR$. From Papageorgiou and R\u adulescu \cite{8} we know that $\hat{u}_1\in D_+$ and of course, $||\hat{u}_1||_p=1$. So, we can choose a small $t\in(0,1)$  such that
\begin{equation}\label{eq8}
	t\hat{u}_1(z)\in\left(0,\delta\right]\ \mbox{for all}\ z\in\overline{\Omega}.
\end{equation}

We have
$$\hat{\psi}_+(t\hat{u}_1)\leq\frac{t^p}{p}\left[\hat{\lambda}_1+\hat{\xi}_0-\mu\right]\ (\mbox{use (\ref{eq7}), (\ref{eq8}) and recall that}\ ||\hat{u}_1||_p=1).$$

We choose $\mu>\hat{\lambda}_1+\hat{\xi}_0$ and obtain
\begin{eqnarray*}
	&&\hat{\psi}_+(t\hat{u}_1)<0,\\
	&\Rightarrow&\hat{\psi}_+(\tilde{u})<0=\hat{\psi}_+(0)\ (\mbox{see (\ref{eq6})}),\\
	&\Rightarrow&\tilde{u}\neq 0.
\end{eqnarray*}

By (\ref{eq6}) we have
\begin{eqnarray}\label{eq9}
	&&\hat{\psi}_+'(\tilde{u})=0,\nonumber\\
	&\Rightarrow&\left\langle A(\tilde{u}),h\right\rangle+\int_{\Omega}[||\xi||_{\infty}+\hat{\xi}_0]|\tilde{u}|^{p-2}\tilde{u}hdz+\int_{\partial\Omega}\beta(z)|\tilde{u}|^{p-2}\tilde{u}hd\sigma=\nonumber\\
	&&\int_{\Omega}\hat{k}(z,\tilde{u}^+)hdz\ \mbox{for all}\ h\in W^{1,p}(\Omega).
\end{eqnarray}

Choosing $h=-\tilde{u}^-\in W^{1,p}(\Omega)$ in (\ref{eq9}), we have
\begin{eqnarray*}
	&&||D\tilde{u}^-||^p_p+c_2||\tilde{u}^-||^p_p\leq 0\ \mbox{for some}\ c_2>0\\
	&&(\mbox{recall that}\ \hat{\xi}_0>0\ \mbox{and use hypothesis}\ H(\beta))\\
	&\Rightarrow&\tilde{u}\geq 0,\ \tilde{u}\neq 0.
\end{eqnarray*}

Also, in (\ref{eq9}) we choose $h=(\tilde{u}-\eta)^+\in W^{1,p}(\Omega)$. Then
\begin{eqnarray*}
	&&\left\langle A(\tilde{u}),(\tilde{u}-\eta)^+\right\rangle+\int_{\Omega}[||\xi||_{\infty}+\hat{\xi}_0]\tilde{u}^{p-1}(\tilde{u}-\eta)^+dz+\int_{\partial\Omega}\beta(z)\tilde{u}^{p-1}(\tilde{u}-\eta)^+d\sigma\\
	&=&\int_{\Omega}[k(z,\eta)+\hat{\xi}_0\eta^{p-1}](\tilde{u}-\eta)^+dz\ (\mbox{see } (\ref{eq5}))\\
	&=&\int_{\Omega}[(\vartheta+\hat{\xi}_0)\eta^{p-1}-c_1\eta^{r-1}](\tilde{u}-\eta)^+dz\ (\mbox{see } (\ref{eq3}))\\
	&\leq&\int_{\Omega}[(||\xi||_{\infty}+\hat{\xi}_0)\eta^{p-1}-c_1\eta^{r-1}](\tilde{u}-\eta)^+dz\ (\mbox{recall that}\ 0<\vartheta<||\xi||_{\infty})\\
	&\leq&\left\langle A(\eta),(\tilde{u}-\eta)^+\right\rangle+\int_{\Omega}[||\xi||_{\infty}+\hat{\xi}_0]\eta^{p-1}(\tilde{u}-\eta)^+dz+\int_{\partial\Omega}\beta(z)\tilde{u}^{p-1}(\tilde{u}-\eta)^+d\sigma\\
	&&(\mbox{note that}\ A(\eta)=0\ \mbox{and use hypothesis}\ H(\beta))\\
	\Rightarrow&&\left\langle A(\tilde{u})-A(\eta),(\tilde{u}-\eta)^+\right\rangle+\int_{\Omega}[||\xi||_{\infty}+\hat{\xi}_0](\tilde{u}^{p-1}-\eta^{p-1})(\tilde{u}-\eta)^+dz\leq 0,\\
	\Rightarrow&&\tilde{u}\leq\eta\ (\mbox{recall that}\ \hat{\xi}_0>0).
\end{eqnarray*}

So, we have proved that
\begin{equation}\label{eq10}
	\tilde{u}\in[0,\eta]=\{u\in W^{1,p}(\Omega):0\leq u(z)\leq\eta\ \mbox{for almost all}\ z\in\Omega\}.
\end{equation}

It follows from (\ref{eq5}), (\ref{eq9}) and (\ref{eq10}) that
\begin{eqnarray}\label{eq11}
	&&\left\langle A(\tilde{u}),h\right\rangle+\int_{\Omega}||\xi||_{\infty}\tilde{u}^{p-1}hdz+\int_{\partial\Omega}\beta(z)\tilde{u}^{p-1}hd\sigma=\int_{\Omega}k(z,\tilde{u})hdz\nonumber\\
	&&\mbox{for all}\ h\in W^{1,p}(\Omega),\nonumber\\
	&\Rightarrow&-\Delta_p\tilde{u}(z)+||\xi||_{\infty}\tilde{u}(z)^{p-1}=k(z,\tilde{u}(z))\ \mbox{for almost all}\ z\in\Omega,\nonumber\\
	&&\frac{\partial \tilde{u}}{\partial n_p}+\beta(z)\tilde{u}^{p-1}=0\ \mbox{on}\ \partial\Omega\ (\mbox{see Papageorgiou and R\u adulescu \cite{8}}).
\end{eqnarray}

By virtue of (\ref{eq11}) and Papageorgiou and R\u adulescu \cite{10}, we have
$$\tilde{u}\in L^{\infty}(\Omega).$$

So, we can apply Theorem 2 of Lieberman \cite{6} and infer that
$$\tilde{u}\in C_+\backslash\{0\}.$$

It follows from  (\ref{eq3}), (\ref{eq5}), (\ref{eq10}) and (\ref{eq11}) that
\begin{eqnarray*}
	&&\Delta_p\tilde{u}(z)\leq[||\xi||_{\infty}+c_1||\tilde{u}||^{r-p}_{\infty}]\tilde{u}(z)^{p-1}\ \mbox{for almost all}\ z\in\Omega\\
	&\Rightarrow&\tilde{u}\in D_+,\ \mbox{that is,}\ \tilde{u}\in[0,\eta]\cap D_+
\end{eqnarray*}
by the nonlinear maximum principle (see Gasinski and Papageorgiou \cite[p. 738]{4}).

The uniqueness of this positive solution of problem (\ref{eq4}) follows from Theorem 1 of Diaz and Saa \cite{1}.

Since problem (\ref{eq4}) is odd (note that $k(z,\cdot)$ is odd, see (\ref{eq3})), it follows that
$$\tilde{v}=-\tilde{u}\in[-\eta,0]\cap(-D_+)$$
is the unique negative solution of (\ref{eq4}).
\end{proof}

Using the two constant sign solutions of problem (\ref{eq4}) produced by Proposition \ref{prop3}, we introduce the following truncation-perturbation of the reaction term $f(z,\cdot)$ (recall that $\hat{\xi}_0>||\xi||_{\infty}$)
\begin{equation}\label{eq12}
	\hat{f}(z,x)=\left\{\begin{array}{ll}
		f(z,\tilde{v}(z))+\hat{\xi}_0|\tilde{v}(z)|^{p-2}\tilde{v}(z)&\mbox{if}\ x<\tilde{v}(z)\\
		f(z,x)+\hat{\xi}_0|x|^{p-2}x&\mbox{if}\ \tilde{v}(z)\leq x\leq\tilde{u}(z)\\
		f(z,\tilde{u}(z))+\hat{\xi}_0\tilde{u}(z)^{p-1}&\mbox{if}\ \tilde{u}(z)<x.
	\end{array}\right.
\end{equation}

This is a Carath\'eodory function. We also consider the positive and negative truncations of $\hat{f}(z,\cdot)$, that is, the Carath\'eodory functions
$$\hat{f}_{\pm}(z,x)=f(z,\pm x^{\pm})\ \mbox{for all}\ (z,x)\in\Omega\times\RR.$$

We set $\hat{F}(z,x)=\int^x_0\hat{f}(z,s)ds,\hat{F}_{\pm}(z,x)=\int^x_0\hat{f}_{\pm}(z,s)ds$ and consider the $C^1$-functionals $\hat{\varphi},\hat{\varphi}_{\pm}:W^{1,p}(\Omega)\rightarrow\RR$ defined by
\begin{eqnarray*}
	&&\hat{\varphi}(u)=\frac{1}{p}||Du||^p_p+\frac{1}{p}\int_{\Omega}[\xi(z)+\hat{\xi}_0]|u|^pdz+\frac{1}{p}\int_{\partial\Omega}\beta(z)|u|^pd\sigma-\int_{\Omega}\hat{F}(z,u)dz,\\
	&&\hat{\varphi}_{\pm}(u)=\frac{1}{p}||Du||^p_p+\frac{1}{p}\int_{\Omega}[\xi(z)+\hat{\xi}_0]|u|^pdz+\frac{1}{p}\int_{\partial\Omega}\beta(z)|u|^pd\sigma-\int_{\Omega}\hat{F}_{\pm}(z,u)dz\\
	&&\mbox{for all}\ u\in W^{1,p}(\Omega).
\end{eqnarray*}

From (\ref{eq12}) and since $\hat{\xi}_0>0$, we get the following proposition.
\begin{prop}\label{prop4}
	If hypotheses $H(\xi),H(\beta),H(f)$ hold, then $\hat{\varphi}$ is even and coercive.
\end{prop}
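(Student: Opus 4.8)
The plan is to verify the two assertions separately, each following in a routine way from the definition \eqref{eq12} of $\hat f$ and the standing hypotheses, with the indefinite potential $\xi$ handled by the crucial shift $\hat\xi_0>\|\xi\|_\infty$. For evenness, I would observe that the only $u$-dependent pieces of $\hat\varphi$ are the norm term $\frac1p\|Du\|_p^p$, the potential term $\frac1p\int_\Omega[\xi+\hat\xi_0]|u|^p\,dz$, the boundary term $\frac1p\int_{\partial\Omega}\beta|u|^p\,d\sigma$, and the reaction primitive $\int_\Omega \hat F(z,u)\,dz$. The first three are manifestly invariant under $u\mapsto -u$ since they depend on $u$ only through $|Du|$ and $|u|$. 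For the last term the key point is that the truncation \eqref{eq12} is built from the \emph{odd} pair of bounds $\tilde v=-\tilde u$ produced by Proposition \ref{prop3}; since $f(z,\cdot)$ is odd on $[-\eta,\eta]\supseteq[\tilde v(z),\tilde u(z)]$ and the perturbation $x\mapsto \hat\xi_0|x|^{p-2}x$ is odd, one checks directly from the three branches of \eqref{eq12} that $\hat f(z,-x)=-\hat f(z,x)$ for almost all $z$ and all $x$, whence $\hat F(z,\cdot)$ is even and so is $\hat\varphi$.

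For coercivity, the essential observation is that the bounds $\tilde v,\tilde u$ are fixed $L^\infty$ functions, so $f(z,\tilde v(z))$ and $f(z,\tilde u(z))$ lie in $L^\infty(\Omega)$ by hypothesis $H(f)(i)$, and the perturbation $\hat\xi_0|x|^{p-2}x$ is \emph{capped} at the constant values $\pm\hat\xi_0\tilde u(z)^{p-1}$ outside the interval $[\tilde v(z),\tilde u(z)]$. Consequently $\hat f(z,\cdot)$ is uniformly bounded, giving a linear-growth bound $|\hat F(z,x)|\le c_3(1+|x|)$ for some $c_3>0$, so that $\int_\Omega \hat F(z,u)\,dz$ grows at most like $\|u\|_1\le c_4\|u\|$, which is dominated by any genuinely $p$-growing quadratic-type term. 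The plan is then to bound the quadratic part from below: dropping the nonnegative boundary integral (using $H(\beta)$) and using $\xi(z)+\hat\xi_0\ge \hat\xi_0-\|\xi\|_\infty=:c_5>0$ for almost all $z$, one obtains
\[
\hat\varphi(u)\ \ge\ \frac1p\|Du\|_p^p+\frac{c_5}{p}\|u\|_p^p-c_4\|u\|-c_6\ \ge\ c_7\|u\|^p-c_4\|u\|-c_6
\]
for suitable positive constants, where $c_7=\frac1p\min\{1,c_5\}$ and the last inequality uses the definition of the Sobolev norm $\|u\|^p=\|u\|_p^p+\|Du\|_p^p$. Since $p>1$, the right-hand side tends to $+\infty$ as $\|u\|\to\infty$, establishing coercivity.

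The only delicate point worth flagging is ensuring the potential term is controlled from below despite $\xi$ being sign-changing: this is exactly where the choice $\hat\xi_0>\|\xi\|_\infty$ enters, guaranteeing the pointwise lower bound $\xi(z)+\hat\xi_0\ge c_5>0$ and hence a coercive positive-definite $p$-form. I do not expect any real obstacle here; both claims reduce to the elementary bookkeeping above, and the statement of the proposition ("From \eqref{eq12} and since $\hat\xi_0>0$, we get\dots") signals that the authors regard it as an immediate consequence of the construction. The main thing to get right is simply matching the three branches of \eqref{eq12} to confirm oddness and the uniform boundedness of $\hat f$.
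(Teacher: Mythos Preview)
Your proposal is correct and follows exactly the approach the paper intends: the paper gives no explicit proof, merely noting that the result follows ``from \eqref{eq12} and since $\hat{\xi}_0>0$,'' and your argument is precisely the routine verification behind this remark---oddness of $\hat f$ from the symmetric truncation $\tilde v=-\tilde u$ together with oddness of $f(z,\cdot)$ on $[-\eta,\eta]$, and coercivity from the uniform $L^\infty$ bound on $\hat f$ combined with the strict positivity $\xi(z)+\hat\xi_0\ge\hat\xi_0-\|\xi\|_\infty>0$.
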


From this proposition, we infer the following corollary.
\begin{corollary}\label{cor5}
	If hypotheses $H(\xi),H(\beta),H(f)$ hold, then the functional $\hat{\varphi}$ is bounded below and satisfies the PS-condition.
\end{corollary}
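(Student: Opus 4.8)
The plan is to derive Corollary~\ref{cor5} as a routine consequence of Proposition~\ref{prop4}. The statement that $\hat\varphi$ is bounded below is immediate from coercivity: a coercive functional on $W^{1,p}(\Omega)$ satisfies $\hat\varphi(u)\to+\infty$ as $\|u\|\to\infty$, hence it cannot tend to $-\infty$ along any sequence and must attain a finite infimum. More precisely, coercivity gives an $R>0$ with $\hat\varphi(u)\ge\hat\varphi(0)=0$ for all $\|u\|\ge R$, while on the bounded set $\{\|u\|\le R\}$ the functional is bounded below because $\hat F$ is controlled by the bounded truncation in \eqref{eq12}; thus $\inf\hat\varphi>-\infty$.

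The substantive part is the PS-condition. First I would take a sequence $\{u_n\}_{n\ge1}\subseteq W^{1,p}(\Omega)$ with $|\hat\varphi(u_n)|\le M$ for some $M>0$ and $\hat\varphi'(u_n)\to0$ in $W^{1,p}(\Omega)^*$. The boundedness of $\{\hat\varphi(u_n)\}$ together with coercivity forces $\{u_n\}$ to be bounded in $W^{1,p}(\Omega)$: if not, passing to a subsequence with $\|u_n\|\to\infty$ would give $\hat\varphi(u_n)\to+\infty$, contradicting $\hat\varphi(u_n)\le M$. By reflexivity of $W^{1,p}(\Omega)$ and the compactness of the embedding into $L^p(\Omega)$ and of the trace map into $L^p(\partial\Omega)$, I may pass to a further subsequence so that $u_n\xrightarrow{\text{w}}u$ in $W^{1,p}(\Omega)$, $u_n\to u$ in $L^p(\Omega)$, and $u_n\to u$ in $L^p(\partial\Omega)$.

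Next I would use $\hat\varphi'(u_n)\to0$ by testing with $h=u_n-u\in W^{1,p}(\Omega)$, which gives
$$\langle A(u_n),u_n-u\rangle+\int_\Omega[\xi(z)+\hat\xi_0]|u_n|^{p-2}u_n(u_n-u)\,dz+\int_{\partial\Omega}\beta(z)|u_n|^{p-2}u_n(u_n-u)\,d\sigma-\int_\Omega\hat f(z,u_n)(u_n-u)\,dz=\langle\hat\varphi'(u_n),u_n-u\rangle\to0.$$
The last three integrals tend to zero: the second by the strong $L^p(\Omega)$ convergence, the third by the strong $L^p(\partial\Omega)$ convergence and $\beta\in L^\infty(\partial\Omega)$, and the reaction integral because $\hat f(z,\cdot)$ is bounded by \eqref{eq12} (the truncation caps it between the fixed values at $\tilde v$ and $\tilde u$) so that $\hat f(\cdot,u_n)$ is bounded in $L^{p'}(\Omega)$ while $u_n-u\to0$ in $L^p(\Omega)$. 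Consequently $\limsup_{n\to\infty}\langle A(u_n),u_n-u\rangle\le0$.

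Finally I would invoke the $(S)_+$ property of $A$, recalled in the excerpt: from $u_n\xrightarrow{\text{w}}u$ and $\limsup_n\langle A(u_n),u_n-u\rangle\le0$ it follows that $u_n\to u$ strongly in $W^{1,p}(\Omega)$, which is exactly the PS-condition. The only point requiring mild care is confirming the $L^{p'}$-boundedness of $\hat f(\cdot,u_n)$ uniformly in $n$; this is where the truncation design in \eqref{eq12} does the work, since outside $[\tilde v(z),\tilde u(z)]$ the nonlinearity is frozen at finite values and inside it is bounded via hypothesis $H(f)(i)$. I do not expect any genuine obstacle here — the whole corollary is a standard application of coercivity plus the $(S)_+$ structure of the $p$-Laplacian, and the compactness of the embeddings neutralizes the lower-order and boundary terms.
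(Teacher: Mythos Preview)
Your proposal is correct and follows exactly the route the paper intends: the corollary is stated immediately after Proposition~\ref{prop4} with no explicit proof, being understood as the standard consequence of coercivity (giving boundedness below and boundedness of PS-sequences) together with the $(S)_+$ property of $A$ and the compact embeddings. Your write-up simply fleshes out this routine argument.
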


Also, we have this proposition.
\begin{prop}\label{prop6}
	If hypotheses $H(\xi),H(\beta),H(f)$ hold, then $K_{\hat{\varphi}}\subseteq C^1(\overline{\Omega})$ and there exists $M>0$ such that
	$$-M\leq u(z)\leq M\ \mbox{for all}\ z\in\overline{\Omega},\ \mbox{all}\ u\in K_{\hat{\varphi}}.$$
\end{prop}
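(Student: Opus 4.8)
The plan is to exploit that the truncation--perturbation $\hat{f}$ is \emph{uniformly bounded}, and then to run the standard nonlinear regularity machinery, taking care that all constants stay independent of the particular critical point. First I would record the boundedness of $\hat{f}$. Inspecting the three branches of (\ref{eq12}) and recalling from Proposition \ref{prop3} that $\tilde{v}\le\tilde{u}$ with $\tilde{u},\tilde{v}\in[-\eta,\eta]$, every value $\hat{f}(z,x)$ has the form $f(z,y)+\hat{\xi}_0|y|^{p-2}y$ for some $y$ with $|y|\le\eta$. Hypothesis $H(f)(i)$ then yields a constant $c_0>0$ (depending only on $\|a_\eta\|_\infty$, $\hat{\xi}_0$ and $\eta$) such that $|\hat{f}(z,x)|\le c_0$ for almost all $z\in\Omega$ and all $x\in\RR$.

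Next I would derive a uniform a priori bound in $W^{1,p}(\Omega)$. Let $u\in K_{\hat{\varphi}}$, so that $\hat{\varphi}'(u)=0$, i.e.
\[
\langle A(u),h\rangle+\int_{\Omega}[\xi(z)+\hat{\xi}_0]|u|^{p-2}uh\,dz+\int_{\partial\Omega}\beta(z)|u|^{p-2}uh\,d\sigma=\int_{\Omega}\hat{f}(z,u)h\,dz
\]
for all $h\in W^{1,p}(\Omega)$. Choosing $h=u$ and using that $\xi(z)+\hat{\xi}_0\ge\hat{\xi}_0-\|\xi\|_\infty>0$ (since $\hat{\xi}_0>\|\xi\|_\infty$) together with $\beta\ge 0$, the left-hand side dominates $c_1\|u\|^p$ for some $c_1>0$, while the right-hand side is at most $c_0\|u\|_1\le c_2\|u\|$ by H\"older's inequality. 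Hence $\|u\|^{p-1}\le c_2/c_1$, and so there is $M_1>0$ with $\|u\|\le M_1$ for every $u\in K_{\hat{\varphi}}$.

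Finally I would invoke regularity. Each such $u$ is a weak solution of the Robin problem driven by $-\Delta_p+[\xi+\hat{\xi}_0]I$ with right-hand side $\hat{f}(\cdot,u)$ bounded by $c_0$, subject to the uniform bound $\|u\|\le M_1$; the $L^\infty$-regularity result of Papageorgiou and R\u adulescu \cite{10} then supplies $M>0$, depending only on $c_0$, $M_1$ and the data of the problem, with $\|u\|_\infty\le M$, which is precisely the asserted bound $-M\le u(z)\le M$. Feeding this back into the equation and applying the regularity theorem of Lieberman \cite{6} yields $u\in C^1(\overline{\Omega})$ (in fact $u\in C^{1,\gamma}(\overline{\Omega})$ for some $\gamma\in(0,1)$), giving $K_{\hat{\varphi}}\subseteq C^1(\overline{\Omega})$. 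The main obstacle is guaranteeing the \emph{uniformity} of $M$: the constant produced by \cite{10} must depend only on the fixed bound $c_0$ of $\hat{f}$ and on the uniform bound $M_1$, and not on the individual critical point. This is exactly what the Moser/De Giorgi iteration underlying \cite{10} delivers, once the two uniform inputs---boundedness of $\hat{f}$ and the uniform $W^{1,p}$ bound---are secured in the first two steps.
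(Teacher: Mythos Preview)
Your argument is correct, but the paper reaches the uniform $L^\infty$ bound by a more elementary and direct route. After recording (as you do) that $\hat{f}$ is uniformly bounded, the paper chooses $M>0$ so that $|\hat{f}(z,x)|\le[\xi(z)+\hat{\xi}_0]M^{p-1}$ for almost all $z$ and all $x$ (possible since $\xi+\hat{\xi}_0\ge\hat{\xi}_0-\|\xi\|_\infty>0$), and then tests the critical-point equation with $h=(u-M)^+$; a straightforward monotonicity argument yields $u\le M$, and similarly $-M\le u$. This bypasses both the intermediate $W^{1,p}$ estimate and any appeal to Moser/De~Giorgi iteration with tracked constants. Your approach, by contrast, first secures a uniform $W^{1,p}$ bound via $h=u$ and then invokes the regularity result of \cite{10}, relying on the fact that the constants there depend only on the data bounds; this is valid but makes the uniformity of $M$ rest on an external black box, whereas the paper's test-function trick produces the explicit bound in two lines. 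The $C^1$ regularity step is handled the same way in both proofs.
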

\begin{proof}
	The inclusion $K_{\hat{\varphi}}\subseteq C^1(\overline{\Omega})$ follows from the nonlinear regularity theory (see Papageorgiou and R\u adulescu \cite{10} and Lieberman \cite{6}).
	
	From (\ref{eq12}), hypothesis $H(f)(i)$ and the fact that $\hat{\xi}_0>||\xi||_{\infty}$, we see that we can find $M>0$ such that
	\begin{equation}\label{eq13}
		|\hat{f}(z,x)|\leq[\xi(z)+\hat{\xi}_0]M^{p-1}\ \mbox{for almost all}\ z\in\Omega,\ \mbox{all}\ x\in\RR.
	\end{equation}
	
	Suppose that $u\in K_{\hat{\varphi}}$. Then for all $h\in W^{1,p}(\Omega)$, we have
	\begin{eqnarray}\label{eq14}
		&&\left\langle A(u),h\right\rangle+\int_{\Omega}[\xi(z)+\hat{\xi}_0]|u|^{p-2}uhdz+\int_{\partial\Omega}\beta(z)|u|^{p-2}uhd\sigma\nonumber\\
		&=&\int_{\Omega}\hat{f}(z,u)hdz\nonumber\\
		&\leq&\int_{\Omega}|\hat{f}(z,u)||h|dz\nonumber\\
		&\leq&\int_{\Omega}(\xi(z)+\hat{\xi}_0)M^{p-1}|h|dz\ (\mbox{see (\ref{eq13})})\nonumber\\
		&\leq&\int_{\Omega}[\xi(z)+\hat{\xi}_0]M^{p-1}|h|dz+\int_{\partial\Omega}\beta(z)M^{p-1}|h|d\sigma\ (\mbox{see hypothesis}\ H(\beta)).
	\end{eqnarray}
	
	In (\ref{eq14}) we choose $h=(u-M)^+\in W^{1,p}(\Omega)$. Then
	\begin{eqnarray*}
		&&\left\langle A(u),(u-M)^+\right\rangle+\int_{\Omega}[\xi(z)+\hat{\xi}_0]u^{p-1}(u-M)^+dz+\int_{\partial\Omega}\beta(z)u^{p-1}(u-M)^+d\sigma\\
		&\leq&\left\langle A(M),(u-M)^+\right\rangle+\int_{\Omega}[\xi(z)+\hat{\xi}_0]M^{p-1}(u-M)^+dz+\int_{\partial\Omega}\beta(z)M^{p-1}(u-M)^+d\sigma,\\
		&\Rightarrow&\left\langle A(u)-A(M),(u-M)^+\right\rangle+\int_{\Omega}[\xi(z)+\hat{\xi}_0](u^{p-1}-M^{p-1})(u-M)^+dz\leq 0\\
		&&(\mbox{note that}\ A(M)=0\ \mbox{and see hypothesis}\ H(\beta))\\
		&\Rightarrow&u\leq M.
	\end{eqnarray*}
	
	In a similar fashion, we can show that
	\begin{eqnarray*}
		&&-M\leq u,\\
		&\Rightarrow&u\in[-M,M]\cap C^1(\overline{\Omega})\ \mbox{for all}\ u\in K_{\hat{\varphi}}.
	\end{eqnarray*}
\end{proof}

We choose $\vartheta_0\geq\tilde{\vartheta}$ such that for almost all $z\in\Omega$, the functions
\begin{equation}\label{newrelation}
x\mapsto f(z,x)+\vartheta_0|x|^{p-2}x\ \mbox{and}\ x\mapsto(\vartheta+\vartheta_0)|x|^{p-2}x-c_1|x|^{r-2}x\end{equation}
are nondecreasing on $[-M,M]$ (see hypothesis $H(f)(iii)$).
\begin{prop}\label{prop7}
	If hypotheses $H(\xi),H(\beta),H(f)$ hold, then $\tilde{u}\leq u$ for all $u\in K_{\hat{\varphi}_+}\backslash\{0\}$ and $v\leq\tilde{v}$ for all $v\in K_{\hat{\varphi}_-}\backslash\{0\}$.
\end{prop}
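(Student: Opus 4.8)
The plan is to show that every nonzero critical point $u$ of $\hat{\varphi}_+$ is a (weak) supersolution of the auxiliary problem \eqref{eq4}, and then to force $\tilde{u}\le u$ by a weak comparison argument built around the test function $(\tilde u-u)^+$; the statement for $\hat\varphi_-$ will follow from the odd symmetry of the construction. First I would record the Euler equation for $u\in K_{\hat\varphi_+}\setminus\{0\}$: $\langle A(u),h\rangle+\int_\Omega[\xi+\hat\xi_0]|u|^{p-2}uh\,dz+\int_{\partial\Omega}\beta|u|^{p-2}uh\,d\sigma=\int_\Omega\hat f_+(z,u)h\,dz$ for all $h\in W^{1,p}(\Omega)$. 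Choosing $h=-u^-$ and using that $\hat f_+(z,\cdot)=\hat f(z,(\cdot)^+)$ vanishes on $\{u<0\}$ together with $\xi+\hat\xi_0\ge\hat\xi_0-\|\xi\|_\infty>0$ and $\beta\ge0$, I get $\|Du^-\|_p^p+c\|u^-\|_p^p\le0$ for some $c>0$, hence $u\ge0$, $u\neq0$; by the nonlinear regularity invoked in Proposition~\ref{prop6} we then have $u\in C_+\setminus\{0\}$ and $\hat f_+(z,u)=\hat f(z,u)$.

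The heart of the argument is the supersolution property, established on the support of $(\tilde u-u)^+$, where $0\le u<\tilde u\le\eta$. On this set the middle branch of \eqref{eq12} gives $\hat f(z,u)=f(z,u)+\hat\xi_0 u^{p-1}$, so testing the Euler equation against $h=(\tilde u-u)^+$ makes the perturbation $\hat\xi_0$ cancel and leaves $\langle A(u),h\rangle+\int_\Omega\xi u^{p-1}h\,dz+\int_{\partial\Omega}\beta u^{p-1}h\,d\sigma=\int_\Omega f(z,u)h\,dz$. Invoking \eqref{eq2} with $\hat\vartheta\ge\vartheta$ yields $f(z,u)\ge\vartheta u^{p-1}-c_1u^{r-1}=k(z,u)$ on $\{0\le u\le\eta\}$, while the one-sided bound $\xi\le\|\xi\|_\infty$ (with $u^{p-1}h\ge0$) lets me replace $\xi$ by $\|\xi\|_\infty$ in the correct direction. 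Consequently $\langle A(u),(\tilde u-u)^+\rangle+\int_\Omega\|\xi\|_\infty u^{p-1}(\tilde u-u)^+dz+\int_{\partial\Omega}\beta u^{p-1}(\tilde u-u)^+d\sigma\ge\int_\Omega k(z,u)(\tilde u-u)^+dz$, whereas $\tilde u$ tested against the same $h$ satisfies the corresponding equality with $k(z,\tilde u)$ by \eqref{eq11}.

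I would then subtract the $u$-inequality from the $\tilde u$-equality and, after adding $\vartheta_0(\cdot)^{p-1}$ to both zeroth-order terms (recall \eqref{newrelation}), move the coefficient term to the right so that it combines with the reaction. Writing $E=\{\tilde u>u\}$, this reorganizes into $\langle A(\tilde u)-A(u),(\tilde u-u)^+\rangle+\int_{\partial\Omega}\beta(\tilde u^{p-1}-u^{p-1})(\tilde u-u)^+d\sigma\le-\int_E[g(\tilde u)-g(u)](\tilde u-u)\,dz$, where $g(t)=(\|\xi\|_\infty+\vartheta_0)t^{p-1}-[(\vartheta+\vartheta_0)t^{p-1}-c_1t^{r-1}]=(\|\xi\|_\infty-\vartheta)t^{p-1}+c_1t^{r-1}$ (the $\vartheta_0$ cancels). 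The left-hand side is $\ge0$ by the monotonicity of $A$ and of $t\mapsto t^{p-1}$ and by $\beta\ge0$; the right-hand side is $\le0$ because $g$ is a sum of two nondecreasing functions on $[0,\eta]$ (here $\vartheta<\hat\vartheta\le\|\xi\|_\infty$ and $r>p$), so $g(\tilde u)\ge g(u)$ on $E$. Hence every term vanishes, $(\tilde u-u)^+=0$, and $\tilde u\le u$.

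Finally, since $k(z,\cdot)$ and $\hat f(z,\cdot)$ are odd and $\tilde v=-\tilde u$ by Proposition~\ref{prop3}, applying the same reasoning to $-v$ (equivalently, testing with $(v-\tilde v)^-$) gives $v\le\tilde v$ for every $v\in K_{\hat\varphi_-}\setminus\{0\}$. The step I expect to be most delicate is the supersolution claim, where one must reconcile the two different zeroth-order coefficients, namely $\xi+\hat\xi_0$ coming from $\hat\varphi_\pm$ versus $\|\xi\|_\infty$ coming from \eqref{eq4}. It is precisely the cancellation of $\hat\xi_0$ on the middle branch of \eqref{eq12} together with the one-sided bound $\xi\le\|\xi\|_\infty$ that produce the inequality in the right direction; getting these signs right, and choosing $g$ so that the reaction side is nonpositive, is the crux of the proof.
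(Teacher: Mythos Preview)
Your argument is correct, but it is genuinely different from the paper's. The paper does not use the weak comparison test function $(\tilde u-u)^+$. Instead, after establishing (as you do) that $K_{\hat\varphi_+}\subseteq D_+\cup\{0\}$, it fixes $u\in K_{\hat\varphi_+}\setminus\{0\}\subseteq D_+$ and runs a Hopf-type scaling argument: it lets $t^*>0$ be the largest number with $t^*\tilde u\le u$, assumes $t^*<1$, and computes
\[
-\Delta_p(t^*\tilde u)+[\xi(z)+\hat\vartheta_0](t^*\tilde u)^{p-1}
< \hat f(z,u)+\vartheta_0 u^{p-1}
= -\Delta_p u+[\xi(z)+\hat\vartheta_0]u^{p-1}
\]
pointwise, splitting $\Omega$ into $\{u\le\tilde u\}$ and $\{u>\tilde u\}$ to handle the truncation in \eqref{eq12}. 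The strict gap is then fed into the strong comparison principle (Proposition~\ref{prop1}) to obtain $u-t^*\tilde u\in\hat D_+$, contradicting the maximality of $t^*$; hence $t^*\ge 1$.

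Your route is more elementary: it needs only the monotonicity of $A$ and the strict monotonicity of $g(t)=(\|\xi\|_\infty-\vartheta)t^{p-1}+c_1t^{r-1}$ on $[0,\eta]$ (which is exactly where $\vartheta<\hat\vartheta\le\|\xi\|_\infty$ enters), and it bypasses both the $t^*$-scaling and the strong comparison principle entirely. The paper's approach, on the other hand, is the one that generalizes more readily to nonhomogeneous operators (as hinted in the introduction), since the scaling step is tailored to the $(p-1)$-homogeneity of $\Delta_p$ but the strong comparison principle does the heavy lifting. Two minor cosmetic points: the detour through $\vartheta_0$ is unnecessary in your argument (as you yourself note, it cancels in $g$), and the final conclusion $(\tilde u-u)^+=0$ comes specifically from the vanishing of the strictly monotone term $\int_E[g(\tilde u)-g(u)](\tilde u-u)\,dz$, not from $\langle A(\tilde u)-A(u),(\tilde u-u)^+\rangle=0$ alone.
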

\begin{proof}
	As before the nonlinear regularity theory and the nonlinear maximum principle imply that
	\begin{equation}\label{eq15}
		K_{\hat{\varphi}_+}\subseteq D_+\cup\{0\}\ \mbox{and}\ K_{\hat{\varphi}_-}\subseteq (-D_+)\cup\{0\}.
	\end{equation}
	
	Let $u\in K_{\hat{\varphi}_+},u\neq 0$. Let $t^*>0$ be the biggest real number such that
	\begin{equation}\label{eq16}
		t^*\tilde{u}\leq u\ (\mbox{see Filippakis and Papageorgiou \cite[Lemma 3.6]{2}}).
	\end{equation}
	
	Suppose that $t^*\in(0,1)$. Then for $\hat{\vartheta}_0=\hat{\xi}_0+\vartheta_0$ we have
	\begin{eqnarray}\label{eq17}
		&&-\Delta_p(t^*\tilde{u})(z)+[\xi(z)+\hat{\vartheta}_0](t^*\tilde{u})(z)^{p-1}\nonumber\\
		&=&(t^*)^{p-1}[-\Delta_p\tilde{u}(z)+[\xi(z)+\hat{\vartheta}_0]\tilde{u}(z)^{p-1}]\nonumber\\
		&\leq&(t^*)^{p-1}[-\Delta_p\tilde{u}(z)+[||\xi||_{\infty}+\hat{\vartheta}_0]\tilde{u}(z)^{p-1}]\nonumber\\
		&=&(t^*)^{p-1}[k(z,\tilde{u}(z))+\hat{\vartheta_0}\tilde{u}(z)^{p-1}] (\mbox{see Proposition \ref{prop3}})\nonumber\\
		&=&(t^*)^{p-1}[\vartheta\tilde{u}(z)^{p-1}-c_1\tilde{u}(z)^{r-1}+\hat{\vartheta}_0\tilde{u}(z)^{p-1}]\ (\mbox{see Proposition \ref{prop3} and (\ref{eq3})})\nonumber\\
		&<&\vartheta(t^*\tilde{u})(z)^{p-1}-c_1(t^*\tilde{u})(z)^{r-1}+\hat{\vartheta}_0(t^*\tilde{u})(z)^{p-1}\ \mbox{for almost all}\ z\in\Omega\\
		&&(\mbox{recall that}\ t^*<1, r>p).\nonumber
	\end{eqnarray}
	
	Let $\Omega_1=\{z\in\Omega:u(z)\leq\tilde{u}(z)\}$ and $\Omega_2=\{z\in\Omega:\tilde{u}(z)<u(z)\}$.
	
	For almost all $z\in\Omega_1$, we have
	\begin{eqnarray}\label{eq18}
		\hat{f}(z,u(z))+\vartheta_0u(z)^{p-1}&=&f(z,u(z))+\hat{\vartheta}_0u(z)^{p-1}\ (\mbox{see (\ref{eq12}) and (\ref{eq15})})\nonumber\\
		&>&\vartheta u(z)^{p-1}-c_1u(z)^{r-1}+\hat{\vartheta}_0u(z)^{p-1}\ (\mbox{see (\ref{eq2}) and recall that}\ \vartheta<\hat{\vartheta})\nonumber\\
		&\geq&\vartheta(t^*\tilde{u})(z)^{p-1}-c_1(t^*\tilde{u})(z)^{r-1}+
\hat{\vartheta}_0(t^*\tilde{u})(z)^{p-1}.
	\end{eqnarray}
For the last inequality we have used relation \eqref{eq16} in combination with the monotonicity of the mapping $x\mapsto(\vartheta+\vartheta_0)|x|^{p-2}x-c_1|x|^{r-2}x$, see \eqref{newrelation}.
	
	For almost all $z\in\Omega_2$, we have
	\begin{eqnarray}\label{eq19}
		\hat{f}(z,u(z))+\vartheta_0u(z)^{p-1}&\geq&\hat{f}(z,u(z))+\vartheta_0\tilde{u}(z)^{p-1}\ (\mbox{since}\ z\in\Omega_2)\nonumber\\
		&=&f(z,\tilde{u}(z))+\hat{\vartheta}_0\tilde{u}(z)^{p-1}\ (\mbox{see (\ref{eq12})})\nonumber\\
		&\geq&f(z,t^*\tilde{u}(z))+\hat{\vartheta}_0(t^*\tilde{u})(z)^{p-1}\nonumber\\
		&& (\mbox{recall that we have assumed}\ t^*<1)\nonumber\\
		&>&\vartheta(t^*\tilde{u})(z)^{p-1}-c_1(t^*\tilde{u})(z)^{r-1}+\hat{\vartheta}_0(t^*\tilde{u})(z)^{p-1}\\
		&&(\mbox{see (\ref{eq2}), \eqref{newrelation} and recall that}\ \vartheta<\hat{\vartheta})\nonumber.
	\end{eqnarray}
	
	Returning to (\ref{eq17}) and using (\ref{eq18}) and (\ref{eq19}) we see that
	\begin{eqnarray}\label{eq20}
		&&-\Delta_p(t^*\tilde{u})(z)+[\xi(z)+\hat{\vartheta}_0](t^*\tilde{u})(z)^{p-1}\nonumber\\
		&<&\hat{f}(z,u(z))+\vartheta_0u(z)^{p-1}\nonumber\\
		&=&-\Delta_pu(z)+[\xi(z)+\hat{\vartheta}_0]u(z)^{p-1}\ \mbox{for almost all}\ z\in\Omega\ (\mbox{recall that}\ u\in K_{\hat{\varphi}_+}).
	\end{eqnarray}
	
	We introduce the following functions
	\begin{eqnarray*}
		&&h_1(z)=(\vartheta+\hat{\vartheta}_0)(t^*\tilde{u})(z)^{p-1}-c_1(t^*\tilde{u})(z)^{r-1},\\
		&&h_2(z)=(\hat{\vartheta}+\hat{\vartheta}_0)(t^*\tilde{u})(z)^{p-1}-c_1(t^*\tilde{u})(z)^{r-1},\\
		&&h_3(z)=\hat{f}(z,u(z))+\hat{\vartheta}_0u(z)^{p-1}.
	\end{eqnarray*}
	
	Evidently $h_1,h_2\in C^1(\overline{\Omega})$ and $h_3\in L^{\infty}(\Omega)$ (see Proposition \ref{prop6} and (\ref{eq12})). We have
	\begin{eqnarray}\label{eq21}
		&&h_1(z)<h_2(z)\ \mbox{for all}\ z\in\overline{\Omega}\ (\mbox{recall that}\ \vartheta<\hat{\vartheta}),\nonumber\\
		&\Rightarrow&h_1\prec h_2.
	\end{eqnarray}
	
	Also, we have
	\begin{eqnarray*}
		&&h_2\leq h_3\ (\mbox{see (\ref{eq2})}),\\
		&\Rightarrow&h_1\prec h_3\ (\mbox{see (\ref{eq21})}).
	\end{eqnarray*}
	
	By virtue of (\ref{eq20}) we can use Proposition \ref{prop1} (the strong comparison principle) and have
	$$u-t^*\tilde{u}\in\hat{D}_+,$$
	which contradicts the maximality of $t^*>0$ (recall that $\hat{D}_+\subseteq C^1(\overline{\Omega})$ is open). So, we have
	\begin{eqnarray*}
		&&1\leq t^*,\\
		&\Rightarrow&\tilde{u}\leq u\ \mbox{for all}\ u\in K_{\hat{\varphi}_+}\ (\mbox{see (\ref{eq16})}).
	\end{eqnarray*}
	
	In a similar fashion we show that
	$$v\leq\tilde{v}\ \mbox{for all}\ v\in K_{\hat{\varphi}_-}.$$
\end{proof}
\begin{prop}\label{prop8}
	If hypotheses $H(\xi), H(\beta),H(f)$ hold and $V\subseteq W^{1,p}(\Omega)$ is a nontrivial finite dimensional subspace, then we can find $\rho_V>0$ such that
	$$\sup[\hat{\varphi}(u):u\in V, ||u||=\rho_V]<0.$$
\end{prop}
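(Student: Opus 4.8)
The plan is to pit the $(p-1)$-superlinear behaviour of $f$ at the origin, recorded in \eqref{eq7}, against the bounded quadratic-type form that makes up the rest of $\hat\varphi$, exploiting that all norms on the finite-dimensional space $V$ are equivalent. First I would fix the geometry near zero. By Proposition \ref{prop3}, $\tilde{u}\in[0,\eta]\cap D_+$ and $\tilde{v}=-\tilde{u}$, so there is $m>0$ with $\tilde{v}(z)\le -m<0<m\le\tilde{u}(z)$ for all $z\in\overline{\Omega}$. Since $\dim V<\infty$, all norms on $V$ are equivalent, so I fix constants $c_V,\hat{c}>0$ with $\|u\|_\infty\le c_V\|u\|$ and $\hat{c}\|u\|^p\le\|u\|_p^p$ for all $u\in V$ (this is where the continuous embedding $V\hookrightarrow C(\overline{\Omega})$ enters). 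The purpose of these bounds is that once $\|u\|$ is small, $u$ lies pointwise inside $[\tilde{v},\tilde{u}]$, i.e. the middle branch of \eqref{eq12}, where $\hat{f}(z,x)=f(z,x)+\hat{\xi}_0|x|^{p-2}x$ and hence $\hat{F}(z,x)=F(z,x)+\tfrac{\hat{\xi}_0}{p}|x|^p$.

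Next I would carry out the energy estimate. Given $\mu>0$ (to be fixed below), \eqref{eq7} yields $\delta=\delta(\mu)>0$. For $u\in V$ with $\|u\|\le\min\{m,\delta\}/c_V$ one has $|u(z)|\le\min\{m,\delta\}$ for all $z$, so $u$ is in the middle branch and $\hat{F}(z,u)\ge\tfrac{\mu+\hat{\xi}_0}{p}|u|^p$. Substituting this into the definition of $\hat{\varphi}$, the two $\hat{\xi}_0$-contributions (one from $\int_\Omega[\xi+\hat{\xi}_0]|u|^p$, one from $\int_\Omega\hat{F}(z,u)$) cancel exactly, leaving
\[
\hat{\varphi}(u)\le\frac1p\Big(\|Du\|_p^p+\int_\Omega\xi|u|^p\,dz+\int_{\partial\Omega}\beta|u|^p\,d\sigma\Big)-\frac{\mu}{p}\|u\|_p^p.
\]
Bounding the bracket by $c_3\|u\|^p$ with $c_3=1+\|\xi\|_\infty+c_{\mathrm{tr}}\|\beta\|_\infty$ (using $\|Du\|_p^p\le\|u\|^p$, $H(\beta)$ and the continuity of the trace operator) and using $\hat{c}\|u\|^p\le\|u\|_p^p$ gives $\hat{\varphi}(u)\le\tfrac1p(c_3-\mu\hat{c})\|u\|^p$.

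The ordering of the choices is what makes the argument close up. The constants $c_3,\hat{c}$ depend on $V$ but not on $\mu$, so I would first select $\mu>c_3/\hat{c}$, which forces $c_3-\mu\hat{c}<0$; this value of $\mu$ then determines $\delta(\mu)$ and hence $\rho_V:=\min\{m,\delta(\mu)\}/c_V$. For every $u\in V$ with $\|u\|=\rho_V$ the displayed bound gives $\hat{\varphi}(u)\le\tfrac1p(c_3-\mu\hat{c})\rho_V^p<0$, and since the right-hand side is a constant independent of $u$, the supremum over the sphere $V\cap\partial B_{\rho_V}$ is strictly negative.

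The main obstacle is precisely guaranteeing that small-norm elements of $V$ remain pointwise in the middle branch $[\tilde{v},\tilde{u}]$, so that the penalizing constant $\hat{\xi}_0$ cancels and the superlinear lower bound \eqref{eq7} becomes available. This rests on two facts already at hand: that $\tilde{u}$ is bounded below by a positive constant (the content of $\tilde{u}\in D_+$ from Proposition \ref{prop3}), and that finite-dimensionality of $V$ converts smallness in $\|\cdot\|$ into uniform pointwise smallness. Everything else is routine estimation, and the subtlety about quantifier order ($\mu$ before $\delta$ before $\rho_V$) is the only place where care is genuinely needed.
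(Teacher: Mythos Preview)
Your proof is correct and follows essentially the same route as the paper's own argument: use finite-dimensionality of $V$ to convert smallness in $\|\cdot\|$ into pointwise smallness, thereby landing in the middle branch of \eqref{eq12} where the $\hat\xi_0$ terms cancel and the lower bound \eqref{eq7} applies, then choose $\mu$ large enough to make the resulting coefficient $c_3-\mu\hat c$ negative. If anything, your treatment of the quantifier order ($\mu$ chosen first, then $\delta(\mu)$, then $\rho_V$) is more explicit than the paper's, which simply asserts the final estimate and picks $\mu>c_3/c_4$ at the end.
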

\begin{proof}
	
	Since $V$ is finite dimensional, all norms are equivalent. Hence, we can find $\rho_V>0$ such that
	\begin{equation}\label{eq23}
		u\in V,\ ||u||\leq\rho_V\Rightarrow|u(z)|\leq\delta\ \mbox{for almost all}\ z\in\Omega.
	\end{equation}
	
	Then for $u\in V$ with $||u||=\rho_V$, we have
	$$\hat{\varphi}(u)=\frac{1}{p}||Du||^p_p+\frac{1}{p}\int_{\Omega}[\xi(z)+\hat{\xi}_0]|u|^pdz+\frac{1}{p}
\int_{\partial\Omega}\beta(z)|u|^pd\sigma-\int_{\Omega}\hat{F}(z,u)dz.$$
	
	Clearly, we can always have
	$$\delta\in\left(0, \min\left\{{\underset{\mathrm{\overline{\Omega}}}\min}\ \tilde{u}, {\underset{\mathrm{\overline{\Omega}}}\min}(-\tilde{v})\right\}\right)\ (\mbox{recall that}\ \tilde{u}\in D_+,\tilde{v}\in-D_+).$$
	
	So, by (\ref{eq7}), (\ref{eq12}), (\ref{eq23}),  we have
	\begin{eqnarray*}
		\hat{\varphi}(u)&\leq&\frac{1}{p}||Du||^p_p+\frac{||\xi||_{\infty}}{p}||u||^p_p+\frac{1}{p}\int_{\partial\Omega}\beta(z)|u|^pd\sigma-\frac{\mu}{p}||u||^p_p\\
		&\leq&\frac{1}{p}[c_3-\mu c_4]||u||^p\ \mbox{for some}\ c_3,c_4>0\\
		&&(\mbox{here we have used the fact that on $V$ all norms are equivalent}).
	\end{eqnarray*}
	
	Choosing $\mu>\frac{c_3}{c_4}$, we see that
	$$\hat{\varphi}(u)<0\ \mbox{for all}\ u\in V\ \mbox{such that}\ ||u||=\rho_V.$$
\end{proof}

Now we are ready for the main result of this work.
\begin{theorem}\label{th9}
	If hypotheses $H(\xi),H(\beta),H(f)$ hold, then problem (\ref{eq1}) admits a  sequence of distinct nodal solutions
	$$\{u_n\}_{n\geq 1}\subseteq C^1(\overline{\Omega})\ \mbox{such that}\ u_n\rightarrow 0\ \mbox{on}\ C^1(\overline{\Omega}).$$
\end{theorem}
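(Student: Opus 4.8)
The plan is to apply Kajikiya's abstract theorem (Theorem~\ref{th2}) to the even functional $\hat\varphi$ and then to extract, from the resulting critical points, an infinite family of nodal solutions of~\eqref{eq1}. Propositions~\ref{prop4} and Corollary~\ref{cor5} already supply the structural hypotheses of Theorem~\ref{th2}: $\hat\varphi\in C^1(W^{1,p}(\Omega),\RR)$ is even, coercive, bounded below and satisfies the PS-condition, while $\hat\varphi(0)=0$ is immediate from $\hat F(z,0)=0$. To verify the geometric condition near the origin, for each $n\in\NN$ I would fix a nontrivial finite dimensional subspace $V_n\subseteq W^{1,p}(\Omega)$ (for instance with $\dim V_n=n$) and invoke Proposition~\ref{prop8} to obtain $\rho_n=\rho_{V_n}>0$ with $\sup[\hat\varphi(u):u\in V_n,\ \|u\|=\rho_n]<0$. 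Theorem~\ref{th2} then produces a sequence $\{u_n\}_{n\ge1}\subseteq K_{\hat\varphi}$ with $\hat\varphi(u_n)<0$ and $u_n\to0$ in $W^{1,p}(\Omega)$. Since $\hat\varphi(u_n)<0=\hat\varphi(0)$ gives $u_n\neq0$ for all $n$, and since $u_n\to0$, the sequence cannot be eventually constant, so it contains infinitely many distinct terms; thus we obtain infinitely many distinct critical points of $\hat\varphi$.

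Next I would upgrade the convergence to $C^1(\overline\Omega)$. By Proposition~\ref{prop6} every $u_n$ lies in $[-M,M]\cap C^1(\overline\Omega)$, so the reactions $\hat f(\cdot,u_n(\cdot))$ are uniformly bounded by~\eqref{eq13}; the nonlinear regularity theory of Papageorgiou--R\u adulescu~\cite{10} together with Lieberman~\cite{6} then yields some $\alpha\in(0,1)$ and a constant $C>0$, independent of $n$, with $\|u_n\|_{C^{1,\alpha}(\overline\Omega)}\le C$. The compact embedding $C^{1,\alpha}(\overline\Omega)\hookrightarrow C^1(\overline\Omega)$ makes $\{u_n\}$ relatively compact in $C^1(\overline\Omega)$, and because $u_n\to0$ in $W^{1,p}(\Omega)\hookrightarrow L^p(\Omega)$ the only possible $C^1$-limit of any subsequence is $0$; hence $u_n\to0$ in $C^1(\overline\Omega)$.

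I would then show that, for all large $n$, each $u_n$ actually solves the original problem~\eqref{eq1}. Since $\tilde u\in D_+$ and $\tilde v\in-D_+$, the number $m:=\min\{\min_{\overline\Omega}\tilde u,\ \min_{\overline\Omega}(-\tilde v)\}$ is strictly positive, and the $C^1$-convergence gives $\|u_n\|_\infty<m$ for all $n\ge n_0$, whence $\tilde v(z)\le u_n(z)\le\tilde u(z)$ on $\overline\Omega$. On this order interval the truncation~\eqref{eq12} reduces to $\hat f(z,u_n)=f(z,u_n)+\hat\xi_0|u_n|^{p-2}u_n$, so the identity $\hat\varphi'(u_n)=0$ becomes precisely~\eqref{eq1}, and $u_n\in C^1(\overline\Omega)$ is a solution of the Robin problem.

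Finally, the heart of the argument is to establish that these solutions change sign, and this is exactly where Proposition~\ref{prop7} enters. Suppose, for contradiction, that for infinitely many $n$ the solution $u_n\neq0$ has constant sign, say $u_n\ge0$. Then, because $u_n=u_n^+$, the positive truncation leaves the functional unchanged along $u_n$, so $\hat\varphi_+'(u_n)=\hat\varphi'(u_n)=0$ and hence $u_n\in K_{\hat\varphi_+}\setminus\{0\}$. Proposition~\ref{prop7} then forces $\tilde u\le u_n$, so $\|u_n\|_p\ge\|\tilde u\|_p>0$, contradicting $u_n\to0$; the case $u_n\le0$ is symmetric, using $v\le\tilde v$ for $v\in K_{\hat\varphi_-}\setminus\{0\}$. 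Consequently $u_n$ is nodal for every large $n$. Discarding the finitely many initial terms and relabelling, we obtain a sequence of distinct nodal solutions of~\eqref{eq1} in $C^1(\overline\Omega)$ converging to $0$. I expect this last step to be the main obstacle: it is precisely the uniform separation $\tilde u\le u_n$ (resp.\ $u_n\le\tilde v$) supplied by Proposition~\ref{prop7}, incompatible with $u_n\to0$, that rules out constant-sign critical points and thereby guarantees that the solutions are sign changing.
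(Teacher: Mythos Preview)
Your proof is correct and follows essentially the same route as the paper: apply Theorem~\ref{th2} to $\hat\varphi$ via Corollary~\ref{cor5} and Proposition~\ref{prop8}, upgrade the $W^{1,p}$-convergence to $C^1$-convergence through the uniform $C^{1,\alpha}$ bounds coming from Proposition~\ref{prop6} and Lieberman's regularity, and then use the order interval $[\tilde v,\tilde u]$ to undo the truncation~\eqref{eq12} and recover solutions of~\eqref{eq1}. The only noticeable difference is in the nodality step: the paper combines Proposition~\ref{prop7} with the inclusion $u_n\in[\tilde v,\tilde u]$ to force $u_n=\tilde u$ (resp.\ $u_n=\tilde v$) and then observes that $\tilde u,\tilde v$ are \emph{not} solutions of~\eqref{eq1} (since $\vartheta<\hat\vartheta$), whereas you obtain the contradiction directly from $\tilde u\le u_n$ and $u_n\to 0$; your variant is slightly more economical since it avoids checking that $\tilde u$ fails to solve~\eqref{eq1}.
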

\begin{proof}
	Corollary \ref{cor5} and Proposition \ref{prop8} permit us to use  Theorem \ref{th2}. So, we can find $\{u_n\}_{n\geq 1}\subseteq W^{1,p}(\Omega)$ such that
	\begin{equation}\label{eq24}
		u_n\in K_{\hat{\varphi}}\ \mbox{for all}\ n\in\NN\ \mbox{and}\ u_n\rightarrow 0\ \mbox{in}\ W^{1,p}(\Omega).
	\end{equation}
	
	From \eqref{eq23} and Proposition \ref{prop6} we have that $u_n\in C^1(\overline{\Omega})$ for all $n\in\NN$. Moreover, Proposition \ref{prop4} together with Theorem \ref{th2} (see also \cite{6}) imply that we can find $\alpha'\in(0,1)$ and $c_5>0$ such that
	\begin{equation}\label{eq25}
		u_n\in C^{1,\alpha'}(\overline{\Omega})\ \mbox{and}\ ||u_n||_{C^{1,\alpha'}(\overline{\Omega})}\leq c_5\ \mbox{for all}\ n\in\NN.
	\end{equation}
	
	Exploiting the compact embedding of $C^{1,\alpha}(\overline{\Omega})$ into $C^1(\overline{\Omega})$, we can infer from  (\ref{eq24}) and (\ref{eq25}) that
	$$u_n\rightarrow 0\ \mbox{in}\ C^1(\overline{\Omega})\ \mbox{as}\ n\rightarrow\infty.$$
	
	So, we can find $n_0\in\NN$ such that
	$$u_n\in[\tilde{v},\tilde{u}]\ \mbox{for all}\ n\geq n_0.$$
	
	Since $\tilde{v},\, \tilde{u}$ are not solutions of (\ref{eq1}) (see (\ref{eq2}), (\ref{eq3}) and recall that $\vartheta<\hat{\vartheta})$, on account of Proposition \ref{prop7} and (\ref{eq12}), we see that $\{u_n\}_{n\geq n_0}\subseteq C^1(\overline{\Omega})$ (by the nonlinear regularity theory) are nodal solutions of problem (\ref{eq1}).
\end{proof}

 \medskip
{\bf Acknowledgements.} This research was supported by the Slovenian Research Agency grants P1-0292, J1-8131, J1-7025. V.D. R\u adulescu acknowledges the support through a grant of the Romanian National Authority for Scientific Research and Innovation, CNCS-UEFISCDI, project number PN-III-P4-ID-PCE-2016-0130.


\begin{thebibliography}{99}


\bibitem{1} J.I. Diaz, J.E. Saa, Existence et unicit\'{e} de solutions positives pour certaines \'equations elliptiques quasilin\'eaires, {\it  C.R. Acad. Sci. Paris, S\' er. I} {\bf 305} (1987), 521-524.

\bibitem{2} M. Filippakis, N.S. Papageorgiou, Multiple constant sign and nodal solutions for nonlinear elliptic equations with the $p$-Laplacian, {\it  J. Differential Equations} {\bf 245} (2008), 1883-1922.

\bibitem{3} G. Fragnelli, D. Mugnai, N.S. Papageorgiou, Positive and nodal solutions for parametric nonlinear Robin problems with indefinite potential, {\it  Discrete Cont. Dynam. Systems - Ser. A} {\bf 36} (2016), 6133-6166.

\bibitem{4} L. Gasinski, N.S. Papageorgiou, {\it Nonlinear Analysis}, Chapman \& Hall/CRC, Boca Raton, FL, 2006.

\bibitem{5} R. Kajikiya, A critical point theorem related to the symmetric mountain pass lemma and its applications to elliptic equations, {\it J. Funct. Anal.} {\bf 225} (2005), no. 2, 352-370.

\bibitem{7} Z. Li, Z.Q. Wang, On Clark's theorem and its applications to partially sublinear problems, {\it  Ann. Inst. H. Poincar\'e Anal. Non Lin\' eaire} {\bf 32} (2015), 1015-1037.

\bibitem{6} G. Lieberman, Boundary regularity for solutions of degenerate elliptic equations, {\it  Nonlinear Anal.} {\bf 12} (1988), 1203-1219.

\bibitem{8} N.S. Papageorgiou, V.D. R\u adulescu, Multiple solutions with precise sign information for parametric Robin problems, {\it  J. Differential Equations} {\bf 256} (2014), 2449-2479.

\bibitem{9} N.S. Papageorgiou, V.D. R\u adulescu, Infinitely many nodal solutions for nonlinear, nonhomogeneous Robin problems, {\it  Adv. Nonlin. Studies} {\bf 16} (2016), 287-300.

\bibitem{10} N.S. Papageorgiou, V.D. R\u adulescu, Nonlinear nonhomogeneous Robin problems with superlinear reaction term, {\it  Adv. Nonlin. Studies} {\bf 16} (2016), 737-764.

\bibitem{prr1} N.S. Papageorgiou, V.D. R\u adulescu, D.D. Repov\v{s}, Positive solutions for perturbations of the Robin eigenvalue problem plus an indefinite potential, {\it Discrete Cont. Dynam. Systems - Ser. A} {\bf 37} (2017), 2589-2618.

\bibitem{prr2} N.S. Papageorgiou, V.D. R\u adulescu, D.D. Repov\v{s}, Robin problems with indefinite linear part and competition phenomena, {\it Comm. Pure Appl. Anal.} {\bf 16} (2017), 1293-1314.

\bibitem{11} Z-Q. Wang, Nonlinear boundary value problems with concave nonlinearities near the origin, {\it  Nonlin. Diff. Equations Appl. (NoDEA)} {\bf 8} (2001), 15-33.

\end{thebibliography}
\end{document}